\newtheorem{exmp}{Example}[section]
\newtheorem{theorem}{Theorem}[section]
\newtheorem{proposition}[theorem]{Proposition}
\newtheorem{remark}{Remark}[section]
\newtheorem{assum}{Assumption}[section]
\newcommand*\samethanks[1][\value{footnote}]{\footnotemark[#1]}
\begin{document}
\title{Parallel Model Predictive Control for Deterministic Systems\thanks{This work was partially supported by the Swedish Research Council Distinguished Professor Grant 2017-01078, the Knut and Alice Wallenberg Foundation Wallenberg Scholar Grant, and the Swedish Strategic Research Foundation CLAS Grant RIT17-0046. It was also supported by the Swiss National Science Foundation under NCCR Automation (grant agreement 51NF40\_180545).}}
\author{Yuchao Li\thanks{Y. Li was with the division of Decision and Control Systems, KTH Royal Institute of Technology, Sweden, when the majority of this work was conducted. He is now with the School of Computing and Augmented Intelligence, Arizona State University, 
        {\tt\small yuchaoli@asu.edu}.} \and Aren Karapetyan\thanks{A. Karapetyan, N. Schmid, J. Lygeros are with the Automatic Control Laboratory, Swiss Federal Institute of Technology in Zürich, Switzerland,
        {\tt\small akarapetyan,nikschmid,jlygeros@ethz.ch}.}%
\and Niklas Schmid\samethanks \and John Lygeros\samethanks \and Karl H. Johansson\thanks{K. H. Johansson, and J. M\aa rtensson are with the division of Decision and Control Systems, KTH Royal Institute of Technology, Sweden. They are also affiliated with Digital Futures and Integrated Transport Research Lab, {\tt\small kallej,jonas1@kth.se}.} \and Jonas M\aa rtensson\samethanks} 
%
%

\maketitle

\begin{abstract}
In this note, we consider infinite horizon optimal control problems with deterministic systems. Since exact solutions to these problems are often intractable, we propose a parallel model predictive control (MPC) method that provides an approximate solution. Our method computes multiple lookahead minimization problems at each time, where each minimization may involve a different number of lookahead steps, and terminal cost and constraint. {The policy computed via parallel MPC applies the first control of the lookahead minimization with the lowest cost. We show that the proposed method can harnesses the power of multiple computing units. Moreover, we prove that the policy computed via parallel MPC has better performance guarantee than that computed via the single lookahead minimization involved in parallel MPC.}
\end{abstract}

\section{Introduction}\label{sec:introduction}

In this work, we consider discrete-time optimal control problems involving deterministic systems 
\begin{equation}
\label{eq:dynamics}
    x_{k+1}=f(x_k,u_k),\quad k=0,\,1,\,\dots,
\end{equation}
where $x_k$ and $u_k$ are state and control at stage $k$ belonging to state and control spaces $X$ and $U$, respectively, and $f$ maps $X\times U$ to $X$. The control $u_k$ must be chosen from a nonempty constraint set $U(x_k)\subset U$ that may depend on $x_k$. The cost of applying $u_k$ at state $x_k$ is denoted by $g(x_k,u_k)$, and is assumed to be nonnegative:
\begin{equation}
\label{eq:cost}
    0\leq g(x_k,u_k)\leq\infty,\quad x_k\in X,\,u_k\in U(x_k).
\end{equation}
By allowing an infinite value of $g(x,u)$ we can implicitly introduce state and control constraints: a pair $(x,u)$ is infeasible if $g(x,u)=\infty$. We consider \emph{stationary} feedback policies $\mu$, which are functions mapping $X$ to $U$ that satisfy $\mu(x)\in U(x)$ for all $x$.

The \emph{cost function} of a policy $\mu$, denoted by $J_\mu$, maps $X$ to $[0,\infty]$, and is defined at any initial state $x_0 \in X$ as
$$J_\mu(x_0)= \sum_{k=0}^\infty g(x_k,\mu(x_k)),$$
where $x_{k+1}=f(x_k,\mu(x_k))$, $k=0,\,1,\,\dots$. The optimal cost function $J^*$ is defined pointwise as
\begin{equation*}
    J^*(x_0)=\inf_{\substack{u_k\in U(x_k),\ k=0,1,\ldots\\ x_{k+1}=f(x_k,u_k),\ k=0,1,\ldots}}\sum_{k=0}^\infty  g(x_k,u_k).
\end{equation*}
A stationary policy $\mu^*$ is called optimal if
\begin{equation*}
    J_{\mu^*}(x)=J^*(x),\quad \forall x\in X.
\end{equation*}
Optimal policies satisfy
\begin{equation}
\label{eq:mu_star}
    \mu^*(x)\in \arg\min_{u\in U(x)} \big\{g(x,u)+J^*\big(f(x,u)\big)\big\},\quad \forall x\in X,
\end{equation}
{assuming the minimum in \eqref{eq:mu_star} can be attained for all $x$}.

In the context of dynamic programming (DP), the goal is to obtain an optimal policy $\mu^*$ and optimal cost function $J^*$. However, computing $\mu^*$ and $J^*$ is often intractable. To obtain an approximate solution, one may use some known function {$J$} in place of $J^*$ and {compute a policy $\tilde \mu$ via a minimization calculation similar to \eqref{eq:mu_star} at each time}. If {$J$} resembles $J^*$ in some way, we may expect the obtained policy $\tilde{\mu}$ close to optimal, in the sense that $J_{\tilde{\mu}}\approx J^*$. This idea constitutes the essence of a broad class of suboptimal schemes.

A popular method of this kind is model predictive control (MPC), which defines a policy through online computation similar to \eqref{eq:mu_star}.\footnote{{We refer to computations as ``online" if they are executed in real time, and as ``off-line" if they are performed in advance of online operations.}} At state $x_k$, the MPC method considers the state sequence $x_k\dots,x_{k+\ell}$ generated under the control sequence $u_k,\dots,u_{k+\ell-1}$, where the number $\ell$ is referred to as (the length of) the \emph{lookahead steps}. The desirability of different sequences are compared using the corresponding sums of the stage costs, plus a function defined on $x_{k+\ell}$. In addition, the state $x_{k+\ell}$ may also be restricted within a set. The function and the constraint associated with $x_{k+\ell}$ are called \emph{terminal cost} and \emph{terminal constraint}, respectively. Collectively, they aim to approximate the optimal cost $J^*$. {The resulting optimization problem is simpler than computing $J^*$, and may be possible to solve online.} Suppose the minimum of the problem is attained at $\Tilde{u}_k,\dots,\Tilde{u}_{k+\ell-1}$, MPC policy $\tilde{\mu}$ is defined by setting $\tilde{\mu}(x_k)=\Tilde{u}_k$. When $\ell$ is sufficiently large, and the terminal cost and constraint have certain properties, the resulting MPC policy can be close to optimal. 

In contrast to the MPC method that involves a single minimization problem, a different suboptimal scheme, known as parallel rollout, solves simultaneously multiple copies of \eqref{eq:mu_star}. Each copy of the minimization problem in parallel rollout uses a different cost function $J_\mu$ in place of $J^*$, corresponding to a different policy $\mu$. The method then uses the control corresponding to the problem with the lowest cost. Parallel rollout was first proposed to address finite horizon problems, and has been popular in contexts involving discrete state and/or control elements.

Taking inspiration from parallel rollout, we propose a new MPC scheme, which we call \emph{parallel MPC}. It requires solving multiple MPC lookahead minimizations at each time, where each minimization can involve different lookahead steps, and terminal cost and/or constraint. Compared with the MPC scheme noted earlier, the policy $\tilde{\mu}$ defined by parallel MPC has better performance guarantees. Let us consider the following scalar example to illustrate the scheme.

\begin{exmp}\label{eg:scalar_intro} (Parallel MPC for a Scalar Problem) Let $X=\Re$, $U(x)=U=[-1,1]$ for all $x$, and $f(x,u)=2x+u$. Assume that $g(x,u)=x^2+u^2$ if $|x|\leq 2$, and $g(x,u)=\infty$ otherwise. For this problem, an example of parallel MPC can be described as follows: at state $x_k$, solve the following two optimization problems: 
\begin{align*}
    &\min_{u_k,u_{k+1}\in U}g(x_k,u_k)+g(x_{k+1},u_{k+1})\;\;\mathrm{s.\,t.}\,x_{k+2}=0,\\
&\min_{u_k\in U}g(x_k,u_k)+(13/3)x_{k+1}^2\;\;\mathrm{s.\,t.}\,|x_{k+1}|\leq 2/3,
\end{align*}
{with additional constraints $x_{k+1}=2x_k+u_k$, and $x_{k+2}=2x_{k+1}+u_{k+1}$; cf. Eq.~\eqref{eq:dynamics}}. {The procedure through which the terminal costs and constraints are computed will be explained in Example~\ref{eg:scalar_intro_central}.} Then the optimal values and the controls $\Tilde{u}_k$ that attain the minimum in the two problems are computed, and the control that corresponds to the minimum of the two optimal values is applied by parallel MPC.
\end{exmp}

The contributions of this work are as follows:
\begin{itemize}
    \item[(a)] We introduce a parallel MPC method that involves solving multiple minimization problems with different lookahead horizons, and terminal costs and/or constraints.
    \item[(b)] {We show that parallel MPC can harnesses the power of multiple computing units.}
    \item[(c)] {We prove that the policy defined by parallel MPC has a better performance guarantee than that defined by the single lookahead minimization involved in parallel MPC.}
    
\end{itemize}

Note that our method and analysis also apply to the problems with an additional discount factor $\alpha\in(0,1)$ on future stage costs. We restrict our attention to the problems without a discount factor as this is the primary context where MPC methods are used.

{The paper is organized as follows.} In Section~\ref{sec:background} we provide background and references, which place in context our results in relation to the literature. In Section~\ref{sec:main}, we describe our MPC algorithm and its variants, and we provide analysis. In Section~\ref{sec:exmp}, we demonstrate our method through computational examples.

\section{Background}\label{sec:background}
The study of infinite horizon optimal control with deterministic systems and nonnegative cost has a long and rich history. Exact solutions have been derived for some special types of problems, see, e.g., {\cite{bellman1958routing,kalman1960contributions,li2024exact,li2025semilinear}}. For a rigorous analysis that applies generally, see \cite{bertsekas2015value}, where the properties of $J^*$, as well as algorithmic behaviors for exact solution methods, are addressed. 

Despite these extensive research efforts, many deterministic optimal control problems remain intractable. As a result, one often resorts to approximate solution methods. For problems with continuous state and control spaces under constraints, MPC has had abundant applications with wide success. The central themes of MPC analysis are maintaining the \emph{feasibility} of the $\ell$-step lookahead minimization problems upon reaching new states, and the \emph{stability} of the resulting closed-loop system under the obtained policy. The former question is typically addressed by introducing a terminal constraint with an invariance property \cite{bertsekas1971control}. The terminal cost is often designed to fulfill a condition, known as the \emph{Lyapunov inequality}, to provide the stability guarantees \cite{mayne2000constrained}. Since the early work \cite{keerthi1988optimal}, there have been many MPC variants with these desired properties. Among them, examples involving multiple terminal constraints and/or costs include \cite{marruedo2002enlarging,fitri2020nonlinear,li2021data}. {Yet these methods lack generality or flexibility compared with parallel MPC introduced here.} Our method is also different from the MPC schemes reported in \cite{jerez2011parallel,jiang2020parallel}, which focus on solving efficiently a single lookahead minimization via parallel computation. In contrast, our methods involves multiple lookahead minimization problems solved in parallel, and our goal is to improve the performance of the MPC policy measured by its cost function.

As for the problems with discrete state and/or control elements, parallel rollout has been shown to be effective. The method was first introduced in \cite{bertsekas1997rollout} (under the name \emph{rollout with multiple heuristics}) to address combinatorial problems. Since then, parallel rollout and its variants have been extended to address a variety of other problems, see, e.g., \cite{bertsekas1999rollout,chang2004parallel,antunes2014rollout}. For a comprehensive discussion, see the recent monograph \cite{bertsekas2020rollout} and the references quoted therein. The analysis associated with parallel rollout focuses on ensuring a performance improvement property, which implies that the cost function of the policy $\tilde{\mu}$ computed via parallel rollout is no more than all the cost functions $J_\mu$ used to approximate $J^*$. This property of parallel rollout can be proven by using the result \cite[Prop. 6.1.1]{bertsekas2017dynamic} that holds more generally for functions other than the cost functions $J_\mu$. Parallel MPC leverages this generality by allowing more flexible choices of the function to approximate $J^*$ and the numbers of lookahead steps.  

Although MPC and parallel rollout are proposed in different contexts, these methods can be unified through a conceptual framework that is based on DP; see the monograph \cite{bertsekas2022lessons} and the surveys \cite{bertsekas2022newton,bertsekas2024model}. In particular, it is shown that both MPC and parallel rollout are one step of the Newton's method applied to compute the optimal cost function $J^*$, which is responsible for the wide success of these methods. Moreover, the analysis of MPC and rollout can be unified by using a condition that connects to the Lyapunov inequality, as is done in \cite[Sections 6.1 and 6.5]{bertsekas2017dynamic}. We will use this framework to introduce and to analyze the parallel MPC method.

\subsection*{Technical Preliminaries}
We apply the abstract DP model given in \cite{bertsekas2018abstract} in order to concisely state our method and to streamline our proofs. In particular, we denote as $\mathcal{E}^+(X)$ the set of all functions $J:X\mapsto[0,\infty]$. A mapping that plays a key role in our development is the \emph{Bellman operator} $T:\mathcal{E}^+(X)\mapsto\mathcal{E}^+(X)$, defined pointwise as
\begin{equation}
    \label{eq:bellman_op}
    (TJ)(x)=\inf_{u\in U(x)} \big\{g(x,u)+J\big(f(x,u)\big)\big\}.
\end{equation}
This operator is well-posed in view of the nonnegativity \eqref{eq:cost} of the stage cost. We denote as $T^\ell$ the $\ell$-fold composition of $T$, with the convention that $T^0J=J$. From \eqref{eq:bellman_op}, it can be seen that if $J(x)\leq J'(x)$ for all $x\in X$, we have that $(TJ)(x)\leq (TJ')(x)$ for all $x\in X$; we refer to this as the \emph{monotonicity property} of $T$. We are particularly interested in a subset of $\mathcal{E}^+(X)$, which we call the \emph{L-region} (L stands for Lyapunov), defined as 
\begin{equation}
    \label{eq:rod}
    \mathcal{L}(X)=\{J\in \mathcal{E}^+(X)\,|\,(TJ)(x)\leq J(x),\,\forall x\in X\}.
\end{equation}
Since the function $J\in \mathcal{E}^+(X)$ can take the value infinity, it can be used to encode an invariance property, as we will show in Section~\ref{sec:main}. 

Throughout our analysis, we make frequent reference to the following monotonicity property: For all nonnegative functions $J$ and $J'$ that map $X$ to $[0,\infty]$, if $J(x)\leq J'(x)$ for all $x\in X$, then for all $x\in X,\,u\in U(x)$, we have
    \begin{equation}
    \label{eq:monotone}
        g(x,u)+J\big(f(x,u)\big)\leq g(x,u)+J'\big(f(x,u)\big).
    \end{equation}
In addition, we also use the following classical result, which holds well beyond the scope of our study, and can be found in \cite[Prop.~4.1.2, Prop.~4.1.4(a)]{bertsekas2012dynamic}.
\begin{proposition}\label{prop:classic}
Under the nonnegativity condition \eqref{eq:cost}, the following hold. 
\begin{itemize}
    \item[(a)] For all stationary policies $\mu$ we have
    \begin{equation*}
        J_\mu(x)=g\big(x,\mu(x)\big)+J_\mu\big(f\big(x,\mu(x)\big)\big),\ \ \forall x\in X.
    \end{equation*}
    \item[(b)] {For all stationary policies $\mu$, if a nonnegative function $J:X\to [0,\infty]$ satisfies
    \begin{equation*}
        g\big(x,\mu(x)\big)+J\big(f\big(x,\mu(x)\big)\big)\leq J(x),\ \ \forall x\in X,
    \end{equation*}
    then $J_\mu(x)\leq J(x)$ for all $x\in X$.}
\end{itemize}
\end{proposition}

\section{Main Results}\label{sec:main}
Let us formally state our parallel MPC algorithm and analyze its properties. We impose the following assumption, which will allow us to replace $\inf$ by $\min$ thereafter. \emph{The assumption remains valid throughout this {work} and thus is omitted in the subsequent theoretical statements.}
\begin{assum}\label{assum:standing}
The cost nonnegativity condition \eqref{eq:cost} holds. Moreover, for all functions $J\in \mathcal{E}^+(X)$ that we consider, the minimum in the following optimization
\begin{equation*}
    \inf_{u\in U(x)} \big\{g(x,u)+J\big(f(x,u)\big)\big\}
\end{equation*}
is attained for all $x\in X$.
\end{assum}

{Note that our method and analysis is valid under Assumption~\ref{assum:standing}, regardless of the nature of the state and control spaces.} On the other hand, different types of state and control spaces, being continuous or discrete, may impact the validity of the assumption, and can determine the ways in which we solve the lookahead minimization problems involved in our method.

\subsection{Description of Parallel MPC}
Consider the optimal control problem with {state equation} \eqref{eq:dynamics} and stage cost \eqref{eq:cost}. We assume that we have access to $p$ computational units as well as a central unit. Given the current state $x_k$, computational unit $i=1,\dots,p$ solves the problem
\begin{subequations}
\label{eq:rollout}
    \begin{align}
	\min_{\{u_{k+j}\}_{j=0}^{\ell_i-1}}& \quad J_i(x_{k+\ell_i})+\sum_{j=0}^{\ell_i-1} g(x_{k+j},u_{k+j})\\
	\mathrm{s.\,t.} & \quad x_{k+j+1}=f(x_{k+j},u_{k+j}),\,j = 0,...,\ell_i-1,\label{eq:rollout_dynamics}\\
	& \quad  u_{k+j} \in U(x_{k+j}), \ j = 0,...,\ell_i-1, \label{eq:rollout_inputconstraints}
	\end{align}
\end{subequations} 
where $\ell_i$ is some positive integer, which can vary with $i$, and $J_i\in\mathcal{L}(X)$ is a function \emph{computed off-line in explicit form}.\footnote{{We have restricted our attention to the cases where the functions $J_i$ are known in explicit forms. Regarding the situation where the functions $J_i$ can only be evaluated online through simulation, as in parallel rollout given in \cite[Section~5.2]{bertsekas2020rollout}, our method and analysis to be introduced shortly applies in principle. However, in such a case, the lookahead minimization involved in our method can be solved only if the control space is discrete.}} The optimal value of the problem obtained by the $i$th unit is denoted by $\Tilde{J}_i(x_k)$, and the corresponding minimizing sequence is denoted by $(\Tilde{u}^i_k,\Tilde{u}^i_{k+1},\dots,\Tilde{u}^i_{k+\ell_i-1})$. The values $\Tilde{J}_i(x_k)$ and $\Tilde{u}_k^i$ are then sent to the central unit that computes
\begin{equation}
    \label{eq:index_unit}
    \tilde{i}\in\arg \min_{i=1,2,\dots,p}\Tilde{J}_i(x_k).
\end{equation}
The parallel MPC policy is then defined by setting $\tilde{\mu}(x_k)=\Tilde{u}^{\tilde{i}}_k$. If we employ the Bellman operator \eqref{eq:bellman_op}, the algorithm can be stated succinctly as computing 
\begin{equation}
    \label{eq:bellman_p_rollout}
    \tilde{J}(x_k)=\min_{i=1,2,\dots,p}(T^{\ell_i}J_i)(x_k),
\end{equation}
by the central unit, where the values $(T^{\ell_i}J_i)(x_k)$ are supplied by the $p$ computational units. Let us now connect the parallel MPC stated in Example~\ref{eg:scalar_intro} with its generic form given in \eqref{eq:rollout}.
\begin{exmp}\label{eg:scalar_intro_re} In the parallel MPC discussed in Example~\ref{eg:scalar_intro}, there are $2$ computing units. The lookahead minimization problems solved in the two units are:
\begin{equation}
\label{eq:scalar_parallel}
    \begin{aligned}
    &\text{unit 1: }\min_{u_k,u_{k+1}\in U}g(x_k,u_k)+g(x_{k+1},u_{k+1})\;\;\mathrm{s.\,t.}\,x_{k+2}=0,\\
    &\text{unit 2: }\min_{u_k\in U}g(x_k,u_k)+(13/3)x_{k+1}^2\;\;\mathrm{s.\,t.}\,|x_{k+1}|\leq 2/3,
\end{aligned}
\end{equation}
{with additional constraints $x_{k+1}=2x_k+u_k$, and $x_{k+2}=2x_{k+1}+u_{k+1}$}. Here $\ell_1=2$, and $\ell_2=1$. The function $J_1(x)$ takes the value $0$ if $x=0$ and infinity otherwise. Similarly, $J_2(x)=(13/3)x^2$ if $|x|\leq 2/3$ and infinity otherwise.
\end{exmp}

Although we have described the algorithm by assigning computational tasks to several distributed units, it should be clear that the scheme can also be used if there is one single computing unit available. In this case, the scheme is executed by obtaining $(T^{\ell_i}J_i)(x)$ in sequential computation. 

{In what follows, we will show the equivalence between parallel MPC and an MPC scheme involving a single lookahead minimization with a suitably defined terminal cost.} We will also analyze the properties of the corresponding policy $\tilde{\mu}$, and provide a variant of the scheme. For parallel MPC defined by \eqref{eq:rollout} and \eqref{eq:index_unit}, we denote by $\ell$ the minimum value of the lookahead steps, i.e., $\ell=\min_i\{\ell_i\}$. In addition, we define $\Bar{J}_i=T^{\ell_i-\ell}J_i$. As a result, the minimization \eqref{eq:rollout} involving different lookahead steps can be transformed to the ones that have \emph{a common lookahead step $\ell$} {with terminal costs $\Bar{J}_i$}.

\subsection{Justification of Parallel MPC}
In what follows, we show that parallel MPC is equivalent to a single lookahead minimization with some function that approximates $J^*$. For this purpose, let us introduce some concepts. For every function $J\in \mathcal{E}^+(X)$ and state $x\in X$, we define the set $\tilde{U}(J,x)$ as
\begin{equation}
    \label{eq:min_variable}
    \tilde{U}(J,x)=\arg\min_{u\in U(x)} \big\{g(x,u)+J\big(f(x,u)\big)\big\}.
\end{equation}
Assumption~\ref{assum:standing} asserts that these sets are always nonempty. In addition let $\{\Bar{J}_i\}_{i=1}^p\subset \mathcal{E}^+(X)$ be a finite collection of nonnegative functions, and $\Bar{J}$ be their pointwise minimum
\begin{equation}
    \label{eq:j_bar_def}
    \Bar{J}(x)=\min_{i=1,2,\dots,p}\Bar{J}_i(x),\,\forall x\in X.
\end{equation}
For every nonnegative integer $\ell$ and state $x$, we define the nonempty set of the indices of the computing units
\begin{equation}
    \label{eq:i_set_def}
    \tilde{I}(\ell,x)=\arg\min_{i=1,2,\dots,p}(T^\ell\Bar{J}_i)(x).
\end{equation}
Aided by those concepts, we provide our first result, which states that parallel MPC involving $\Bar{J}_i$ is equivalent to solve a single lookahead minimization with $\Bar{J}$ defined in \eqref{eq:j_bar_def}. This equivalence is shown both in terms of minimum value [part (a)], and also the control that attains the minimum [part (b)].

\begin{proposition}\label{prop:parellel}
Let $\{\Bar{J}_i\}_{i=1}^p\subset \mathcal{E}^+(X)$ where $p$ is a positive integer, $\Bar{J}$ be given by \eqref{eq:j_bar_def}, and $\ell$ be a positive integer.
\begin{itemize}
    \item[(a)] The equality 
    \begin{align*}
        &\min_{u\in U(x)} \big\{g(x,u)+(T^{\ell-1}\Bar{J})\big(f(x,u)\big)\big\}\\
    =&\min_{i=1,2,\dots,p}\bigg[\min_{u\in U(x)} \big\{g(x,u)+(T^{\ell-1}\Bar{J}_i)\big(f(x,u)\big)\big\}\bigg]
    \end{align*}
    holds for all $x\in X$, or equivalently,
    \begin{equation}
        \label{eq:paral_equiv}
    (T^\ell\Bar{J})(x)=\min_{i=1,2,\dots,p}\big[(T^\ell\Bar{J}_i)(x)\big],\,\forall x\in X.
    \end{equation}
    \item[(b)] The equality 
    \begin{equation}
        \label{eq:paral_equiv_u}
        \tilde{U}(T^{\ell-1}\Bar{J},x)=\cup_{i\in \tilde{I}(\ell,x)}\tilde{U}(T^{\ell-1}\Bar{J}_i,x)
    \end{equation}
    holds for all $x\in X$, where $\tilde{I}(\ell,x)$ is defined as in \eqref{eq:i_set_def}. 
    \end{itemize}

\end{proposition}
\begin{proof} We will first show that the proposition hold for $\ell=1$. The cases where $\ell>1$ are shown by induction.

(a) Suppose $\ell=1$. For every fixed $i$, by \eqref{eq:j_bar_def} we have that $\Bar{J}(x)\leq \Bar{J}_i(x)$ for all $x$. By the monotonicity property \eqref{eq:monotone}, using $\Bar{J}$ and $\Bar{J}_i$ in place of $J$ and $J'$, and taking minimum on both sides, we have 
\begin{align*}
    &\min_{u\in U(x)} \big\{g(x,u)+\Bar{J}\big(f(x,u)\big)\big\}\\
    \leq&\min_{u\in U(x)} \big\{g(x,u)+\Bar{J}_i\big(f(x,u)\big)\big\},\,\forall x\in X.
\end{align*}
By taking minimum over $i$ on the right-hand side, we have
\begin{align*}
    &\min_{u\in U(x)} \big\{g(x,u)+\Bar{J}\big(f(x,u)\big)\big\}\\
    \leq&\min_{i=1,2,\dots,p}\bigg[\min_{u\in U(x)} \big\{g(x,u)+\Bar{J}_i\big(f(x,u)\big)\big\}\bigg].
\end{align*}

For the converse, we note that for every $x$, $\tilde{U}(\Bar{J},x)$ defined in Eq.~\eqref{eq:min_variable} is nonempty by Assumption~\ref{assum:standing}. Let $\tilde{u}$ be an arbitrary element of $\tilde{U}(\Bar{J},x)$. Then 
$$\min_{u\in U(x)} \big\{g(x,u)+\Bar{J}\big(f(x,u)\big)\big\}=g(x,\tilde{u})+\Bar{J}\big(f(x,\tilde{u})\big).$$
In addition, if $\tilde{i}\in \tilde{I}\big(0,f(x,\tilde{u})\big)$ given by \eqref{eq:i_set_def},
$$g(x,\tilde{u})+\Bar{J}\big(f(x,\tilde{u})\big)=g(x,\tilde{u})+\Bar{J}_{\tilde{i}}\big(f(x,\tilde{u})\big).$$
However, we also have that 
\begin{align*}
    &\min_{i=1,2,\dots,p}\bigg[\min_{u\in U(x)} \big\{g(x,u)+\Bar{J}_i\big(f(x,u)\big)\big\}\bigg]\\
    \leq &g(x,\tilde{u})+\Bar{J}_{\tilde{i}}\big(f(x,\tilde{u})\big),
\end{align*}
which concludes the proof for $\ell=1$. In particular, we obtain the equality
\begin{equation}
\label{eq:paral_equiv_l1}
    \begin{aligned}
        &\min_{u\in U(x)} \big\{g(x,u)+\Bar{J}\big(f(x,u)\big)\big\}\\
    =&\min_{i=1,2,\dots,p}\bigg[\min_{u\in U(x)} \big\{g(x,u)+\Bar{J}_i\big(f(x,u)\big)\big\}\bigg]
    \end{aligned}
\end{equation}

Let us assume that \eqref{eq:paral_equiv} hold for $\ell-1$ with $\ell>1$, then 
    \begin{align*}
        &\min_{u\in U(x)} \big\{g(x,u)+(T^{\ell-1}\Bar{J})\big(f(x,u)\big)\big\}\\
        =&\min_{u\in U(x)} \Big\{g(x,u)+\min_{i=1,2,\dots,p}\big[(T^{\ell-1}\Bar{J}_i)\big(f(x,u)\big)\big]\Big\}.
    \end{align*}
    Applying the equality \eqref{eq:paral_equiv_l1} with $T^{\ell-1}\Bar{J}_i$ in place of $\Bar{J}_i$, and $\min_{i=1,2,\dots,p}(T^{\ell-1}\Bar{J}_i)(x)$ in place of $\Bar{J}(x)$, yields that
    \begin{align*}
        &\min_{u\in U(x)} \Big\{g(x,u)+\min_{i=1,2,\dots,p}\big[(T^{\ell-1}\Bar{J}_i)\big(f(x,u)\big)\big]\Big\}\\
        =&\min_{i=1,2,\dots,p}\bigg[\min_{u\in U(x)} \big\{g(x,u)+(T^{\ell-1}\Bar{J}_i)\big(f(x,u)\big)\big\}\bigg].
    \end{align*}
    Combining two equations gives the desired result.

(b) We show that the result holds for $\ell=1$. The induction part is similar to that of the proof for part (a) and is thus omitted. Given an $x$, let $\tilde{u}\in \tilde{U}(\Bar{J},x)$, then we have that
    $$(T\Bar{J})(x)=g(x,\tilde{u})+\Bar{J}\big(f(x,\tilde{u})\big).$$
    Also, if $\tilde{i}\in \tilde{I}\big(0,f(x,\tilde{u})\big)$, where $\tilde{I}(\cdot,\cdot)$ is given as in \eqref{eq:i_set_def}, then it yields that
    $$g(x,\tilde{u})+\Bar{J}\big(f(x,\tilde{u})\big)=g(x,\tilde{u})+\Bar{J}_{\tilde{i}}\big(f(x,\tilde{u})\big).$$
    By part (a), we have that
    $$g(x,\tilde{u})+\Bar{J}_{\tilde{i}}\big(f(x,\tilde{u})\big)=\min_{i=1,2,\dots,p}\big[(T\Bar{J}_i)(x)\big].$$
    Therefore, $\tilde{i}\in \tilde{I}(1,x)$ and $\tilde{u}\in \tilde{U}(\Bar{J}_{\tilde{i}},x)$.

    Conversely, let $\tilde{i}\in \tilde{I}(1,x)$ and $\tilde{u}\in \tilde{U}(\Bar{J}_{\tilde{i}},x)$. We will show that $(T\Bar{J})(x)=g(x,\tilde{u})+\Bar{J}\big(f(x,\tilde{u})\big)$. By the definition of $ \tilde{I}(1,x)$ and $\tilde{U}(\Bar{J}_{\tilde{i}},x)$, we have that $(T\Bar{J})(x)=g(x,\tilde{u})+\Bar{J}_{\tilde{i}}\big(f(x,\tilde{u})\big)$. On the other hand, we also have 
    $$(T\Bar{J})(x)\leq g(x,\tilde{u})+\Bar{J}\big(f(x,\tilde{u})\big)\leq g(x,\tilde{u})+\Bar{J}_{\tilde{i}}\big(f(x,\tilde{u})\big),$$
    which gives the desired result. 
\end{proof}
\begin{remark}\label{rmk:parellel}
When $\Bar{J}_i$ are the cost functions of certain policies, this result has been established in \cite{bertsekas1997rollout} for combinatorial problems. It has also been shown for the Markovian decision problem in \cite[Prop.~5.2.2]{bertsekas2020rollout}, the stochastic shortest path problem under certain conditions, as well as deterministic problems \cite[p.~378]{bertsekas2020rollout}. We generalize these results by allowing $\Bar{J}_i$ to be arbitrary nonnegative functions.
\end{remark}

In view of Prop.~\ref{prop:parellel}(a), we can interpret the proposed scheme \eqref{eq:bellman_p_rollout} as solving $(T^\ell \Bar{J})(x_k)$ for some integer $\ell$ and some function $\Bar{J}$. To see this, define {$\ell=\min_i \{\ell_i\}$}, and $\Bar{J}_i=T^{\ell_i-\ell}J_i$. Moreover, Prop.~\ref{prop:parellel}(b) establishes that the unit $\tilde{i}$ selected by the central unit with the smallest value $\tilde{J}_i(x_k)$ must be an element of $\tilde{I}(\ell,x_k)$. In addition, the first control $\tilde{u}_k^{\tilde{i}}$ must be an element of $\tilde{U}(T^{\ell-1}\Bar{J},x_k)$. Therefore, the proposed algorithm solves the problem $(T^\ell\Bar{J})(x_k)$ with $\ell$ and $\Bar{J}$ defined accordingly. As a result of Prop.~\ref{prop:parellel}, analysis on the parallel MPC can be transformed to studying $T^\ell\Bar{J}$. {Indeed, the left side of Eq.~\eqref{eq:paral_equiv} serves as an analytical device for our investigation on the properties of parallel MPC, while the right side of Eq.~\eqref{eq:paral_equiv} represents the actual computational approach we employ. This is because the optimization problem defined by $(T^\ell\Bar{J})(x)$ is often more challenging to solve compared with that defined by the right side of Eq.~(14); see Examples~\ref{eg:scalar_intro_central} and \ref{eg:lq}. Yet, it is easier to analyze the properties of the policy defined by $(T^\ell\Bar{J})(x)$.}

In what follows, we will show that $\Bar{J}$ inherits the desired properties of $J_i$, thus resulting in the desired behavior of the policy $\tilde{\mu}$.

\begin{proposition}\label{prop:parellel_rod}
Let $\{J_i\}_{i=1}^p\subset \mathcal{L}(X)$ where $p$ is a positive integer, and $\{\ell_i\}_{i=1}^p$ be a collection of $p$ positive integers. In addition, let $\ell=\min_i \{\ell_i\}$, $\Bar{J}_i=T^{\ell_i-\ell}J_i$, and $\Bar{J}$ be defined \eqref{eq:j_bar_def}. Then $\Bar{J}\in \mathcal{L}(X)$.
\end{proposition}

\begin{proof}
Note that if $J\in \mathcal{L}(X)$, then due to the monotonicity of $T$, we have that $T^j J\in  \mathcal{L}(X)$ for any nonnegative integer $j$. Therefore, $\Bar{J}_i=T^{\ell_i-\ell}J_i\in\mathcal{L}(X)$. This yields that 
    $$(T\Bar{J}_i)(x)\leq \Bar{J}_i(x),\,\text{for all $x$ and $i$}.$$
    Taking minimum over $i$, the left side becomes $(T\Bar{J})(x)$ due to \eqref{eq:paral_equiv} in Prop.~\ref{prop:parellel}(a) for $\ell=1$, and the right side equals $\Bar{J}(x)$ by \eqref{eq:j_bar_def}.
\end{proof}

Finally, we show that the minimum value computed in \eqref{eq:bellman_p_rollout} provides an upper bound for the cost function of the parallel MPC policy.

\begin{proposition}\label{prop:parellel_ro_bound}
The cost function $J_{\tilde{\mu}}(x)$ of the parallel MPC policy $\tilde{\mu}$ defined through \eqref{eq:rollout} and \eqref{eq:index_unit}, is upper bounded by the value $\tilde{J}(x)$ obtained via minimization \eqref{eq:bellman_p_rollout} by the central unit.
\end{proposition}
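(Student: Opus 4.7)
The strategy is to invoke Proposition~\ref{prop:classic}(b) with $\tilde{J}$ playing the role of $\bar{J}$ and $\tilde{\mu}$ playing the role of $\mu$. Concretely, it suffices to establish the one-step inequality
$$g\bigl(x,\tilde{\mu}(x)\bigr)+\tilde{J}\bigl(f(x,\tilde{\mu}(x))\bigr)\leq \tilde{J}(x),\qquad\forall x\in X,$$
since then Proposition~\ref{prop:classic}(b) immediately delivers $J_{\tilde{\mu}}(x)\leq \tilde{J}(x)$ for all $x$.

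To obtain this inequality I will first rewrite $\tilde{J}$ in the Bellman form emphasized after Corollary~\ref{coro:parellel}. Set $\ell=\min_i \ell_i$ and $\bar{J}_i=T^{\ell_i-\ell}J_i$, and let $\bar{J}$ be their pointwise minimum as in \eqref{eq:j_bar_def}. By Corollary~\ref{coro:parellel}(a) the central unit is computing
$$\tilde{J}(x)=\min_{i=1,\dots,p}\bigl(T^{\ell_i}J_i\bigr)(x)=\min_{i=1,\dots,p}\bigl(T^{\ell}\bar{J}_i\bigr)(x)=(T^\ell\bar{J})(x).$$
Moreover, Corollary~\ref{coro:bar_j} guarantees that $\bar{J}\in\mathcal{D}(X)$, i.e., $T\bar{J}\leq\bar{J}$ pointwise. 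Iterating this inequality via monotonicity of $T$ yields $T^\ell\bar{J}\leq T^{\ell-1}\bar{J}$, a fact that will be the critical comparison below.

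Next, I identify $\tilde{\mu}(x)$ within this abstract setup. The definition of $\tilde{\mu}$ through \eqref{eq:rollout} and \eqref{eq:index_unit} amounts to selecting an index $\tilde{i}\in\tilde{I}(\ell,x)$ and then taking $\tilde{u}^{\tilde{i}}_0\in\tilde{U}(T^{\ell-1}\bar{J}_{\tilde{i}},x)$. By Corollary~\ref{coro:parellel}(b), this exactly means $\tilde{\mu}(x)\in\tilde{U}(T^{\ell-1}\bar{J},x)$, and therefore
$$g\bigl(x,\tilde{\mu}(x)\bigr)+\bigl(T^{\ell-1}\bar{J}\bigr)\bigl(f(x,\tilde{\mu}(x))\bigr)=(T^\ell\bar{J})(x)=\tilde{J}(x).$$
Combining with the pointwise bound $T^\ell\bar{J}\leq T^{\ell-1}\bar{J}$ established above gives
$$g\bigl(x,\tilde{\mu}(x)\bigr)+\tilde{J}\bigl(f(x,\tilde{\mu}(x))\bigr)\leq g\bigl(x,\tilde{\mu}(x)\bigr)+\bigl(T^{\ell-1}\bar{J}\bigr)\bigl(f(x,\tilde{\mu}(x))\bigr)=\tilde{J}(x),$$
which is exactly the hypothesis of Proposition~\ref{prop:classic}(b), completing the argument.

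The main conceptual obstacle is the mismatch between the index $\tilde{i}$ chosen at state $x$ and the (possibly different) indices chosen at successor states along the trajectory generated by $\tilde{\mu}$; a purely trajectory-based argument would need to track how the winning heuristic changes from stage to stage. The abstract approach above sidesteps this entirely by working only with the pointwise function $\tilde{J}=T^\ell\bar{J}$ and exploiting $\bar{J}\in\mathcal{D}(X)$, so that one step of Bellman decrease is all that must be checked, and Proposition~\ref{prop:classic}(b) takes care of the infinite-horizon summation.
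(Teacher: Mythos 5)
Your proof is correct and follows essentially the same route as the paper's: both reduce the scheme to computing $(T^\ell\bar{J})(x)$ via Corollary~\ref{coro:parellel}, use $\bar{J}\in\mathcal{D}(X)$ from Corollary~\ref{coro:bar_j} to obtain the one-step Bellman decrease, and conclude with Proposition~\ref{prop:classic}. The only cosmetic difference is that you apply Proposition~\ref{prop:classic}(b) directly to $\tilde{J}=T^\ell\bar{J}$, whereas the paper applies it to $T^{\ell-1}\bar{J}$ (yielding $J_{\tilde{\mu}}\leq T^{\ell-1}\bar{J}$) and then sharpens this to $J_{\tilde{\mu}}\leq T^\ell\bar{J}$ with one additional application of Proposition~\ref{prop:classic}(a) and monotonicity.
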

\begin{proof}
    Our preceding discussion has shown that the parallel MPC scheme is equivalent to computing $(T^\ell\Bar{J})(x)$. In addition, we have that for $i=1,\dots,p$,
    \begin{align*}
        &g\big(x,\tilde{\mu}(x)\big)+\big(T^{\ell-1}\Bar{J}\big)\big(f\big(x,\Tilde{\mu}(x)\big)\big)\\
        =& (T^\ell\Bar{J})(x)\leq (T^{\ell-1}\Bar{J})(x),
    \end{align*}
    where the last inequality follows as $\Bar{J}\in\mathcal{L}(X)$, by Prop.~\ref{prop:parellel_rod}. {Applying Prop.~\ref{prop:classic}(b) with $T^{\ell-1}\Bar{J}$ as $J$}, we have $J_{\Tilde{\mu}}(x)\leq T^{\ell-1}\Bar{J}(x)$. By the monotonicity property \eqref{eq:monotone}, we have 
\begin{align*}
    J_{\Tilde{\mu}}(x)=&g\big(x,\Tilde{\mu}(x)\big)+J_{\Tilde{\mu}}\big(f\big(x,\Tilde{\mu}(x)\big)\big)\\
    \leq& g\big(x,\Tilde{\mu}(x)\big)+\big(T^{\ell-1}\Bar{J}\big)\big(f\big(x,\Tilde{\mu}(x)\big)\big)=(T^\ell\Bar{J})(x),
\end{align*}
where the first equality is due to Prop.~\ref{prop:classic}(a), the last equality is due to the definition of $\Tilde{\mu}$. The proof is thus complete.
\end{proof}

\begin{remark}\label{rmk:performance}
{Let us denote by $\tilde{\mu}_i$ the MPC policy defined by $T^{\ell_i}J_i$; i.e., the MPC policy computed by the $i$th unit. Classical MPC theory states that $J_{\tilde{\mu}_i}(x)\leq \tilde J_i(x)\leq \infty$ for all $x\in X$. The proposition states that the cost function of the parallel MPC policy $\tilde{\mu}$ is upper bounded by the pointwise minimum of the functions $\tilde{J}_i$ computed via individual lookahead. Therefore, $J_{\tilde{\mu}}(x)<\infty$ as long as $\tilde J_i(x)<\infty$ for some $i$. This implies that the set of the states $x$ where $J_{\tilde{\mu}}(x)<\infty$ is the union of the sets $\{x\in X\,|\,J_{\tilde{\mu}_i}(x)<\infty\}$, $i=1,2,\dots,p$.}
\end{remark}

\begin{remark}\label{rmk:pi_imp}
{Under certain conditions, we can show that the cost function $J_{\tilde\mu}$ satisfies $J_{\tilde\mu}\leq J_{\tilde\mu_i}$ for some $i$. In particular, if $J_j=J_{\tilde\mu_i}$ for some $j\neq i$, then $\tilde J_j=T^{\ell_j}J_j\leq J_j=J_{\tilde\mu_i}$. Such a case arises when the function $J_j$ is constructed via collecting state-and-control trajectory data under $\tilde\mu_i$, as is proposed in \cite{rosolia2017learning}. Our computational study, reported in Section~\ref{sec:exmp}, also provides empirical evidence that the policy $\tilde{\mu}$ often outperforms the MPC policies defined by the individual lookahead minimization even without such conditions.}
\end{remark}

\begin{remark}\label{rmk:parellel_rollout}
If $\ell_i=\ell$ for all $i$, and the functions $J_i$ are cost functions of some policies $\mu_i$, then the result here recovers the conclusion stated in \cite[p.~378]{bertsekas2020rollout}.  
\end{remark}

{We demonstrate these propositions using our running example.

\begin{exmp}\label{eg:scalar_intro_central} Consider the parallel MPC applied to the problem studied in Examples~\ref{eg:scalar_intro} and \ref{eg:scalar_intro_re}. Prop.~\ref{prop:parellel} states that the minimum of the optimal values computed by the two units, as shown in Example~\ref{eg:scalar_intro_re}, equals to the minimum $\tilde J(x_k)$ of the following problem:
\begin{equation}
\label{eq:scalar_central}
    \begin{aligned}
    \min_{u_k\in U}& \quad  g(x_{k},u_{k})+\bar J(x_{k+1})\\
	\mathrm{s.\,t.} & \quad \bar J(x_{k+1})=\min\big\{\bar J_1(x_{k+1}),\bar J_2(x_{k+1})\big\},\\
    & \quad \bar J_1(x_{k+1})=\min_{\substack{u_{k+1}\in U\\2x_{k+1}+u_{k+1}=0}}g(x_{k+1},u_{k+1}),\\
	& \quad  \bar J_2(x_{k+1})=\begin{cases}13/3x_{k+1}^2,&\text{if }|x_{k+1}|\leq 2/3,\\
    \infty,&\text{otherwise;}\end{cases}
\end{aligned}
\end{equation}
cf. Eq.~\eqref{eq:paral_equiv}. Moreover, the control $\tilde u_k$ that attains the minimum in problem \eqref{eq:scalar_central} also attains the minimum of the problem computed by the unit with smaller minimum values in problems \eqref{eq:scalar_parallel}; cf. Eq.~\eqref{eq:paral_equiv_u}. Clearly, problems \eqref{eq:scalar_parallel} are easier to solve than problem \eqref{eq:scalar_central}. 

Taking as given that $\bar J_1,\bar J_2\in \mathcal{L}(X)$ (we will show this later), Props.~\ref{prop:parellel_rod} and \ref{prop:parellel_ro_bound} state that the policy $\tilde \mu$ defined by $\tilde{\mu}(x_k)=\tilde u_k$ satisfies $J_{\tilde\mu}(x_k)\leq \tilde J(x_k)$. 
\end{exmp}}

\subsection{Properties of the L-Region}
We now address the question on how to obtain $J_i$ that belongs to the L-region defined in \eqref{eq:rod}. Prop.~\ref{prop:classic}(a) suggests that any $J_\mu$ constitutes a valid option. The following result from \cite{li2021data} (included here for completeness) shows that $J_i$ can be used to encode simultaneously the Lyapunov inequality of the terminal cost and the invariance property of the terminal constraint. The cost and the constraint can be computed via optimization, as in \cite{li2023performance,johansson2024stable}, as well as via sampled data \cite{rosolia2017learning,li2021data}. To this end, we denote by $\delta_C:X\mapsto \{0,\infty\}$ the function taking value $0$ if $x\in C$ and infinity otherwise.

\begin{proposition}\label{prop:d_fun_policy}
Let $S\subset X$, $V\in \mathcal{E}^+(X)$, and $\mu$ be a policy such that
\begin{equation}
    \label{eq:invariance}
    x\in S\implies f\big(x,\mu(x)\big)\in S,
\end{equation}
and
\begin{equation}
    \label{eq:lyap_mpc}   g\big(x,\mu(x)\big)+V\Big(f\big(x,\mu(x)\big)\Big)\leq V(x),\quad \forall x\in S.
\end{equation}
Then $J\in \mathcal{E}^+(X)$ defined as $J(x)=V(x)+\delta_S(x)$ belongs to the L-region. 
\end{proposition}

\begin{remark}\label{rmk:d_fun_policy}
The condition stated in \eqref{eq:invariance} is often referred to as the invariance property of the set $S$, also known as $S$ being \emph{strong reachable}, introduced first in \cite{bertsekas1971control}. When the function $V$ takes nonnegative real numbers, the inequality \eqref{eq:lyap_mpc} is equivalent to the Lyapunov inequality. Both of these concepts are extensively studied in the MPC literature, and Prop.~\ref{prop:d_fun_policy} shows that they can be encoded as the property defining the L-region. This is convenient for our algorithmic design and analysis. However, we do not investigate the computation of the set $S$ and the function $V$ in the present work.   
\end{remark}

{We show next that the functions $\bar J_1$ and $\bar J_2$ in Example~\ref{eg:scalar_intro_central} belong to the L-region using Prop.~\ref{prop:d_fun_policy}.

\begin{exmp}\label{eg:scalar_intro_central} Let us consider $\bar J_1$ and $\bar J_2$ defined in the problem \eqref{eq:scalar_central}. We define $J_1=V_1+\delta_{S_1}$, where $V_1\equiv0$ and $S_1=\{0\}$. Let $\mu_1(0)=0$. Then one can verify that conditions~\eqref{eq:invariance} and \eqref{eq:lyap_mpc} are satisfied with $S_1$, $V_1$, and $\mu_1$ in place of $S$, $V$, and $\mu$. Therefore, $J_1$ belongs to the L-region. Due to monotonicity of $T$, we have $\bar J_1=TJ_1$ belong to the L-region as well. 

As for $\bar J_2$, we consider $\mu_2(x)=-3/2x$. In view of the control constraint $U=[-1,1]$, $\mu_2(x)=-3/2x$ is defined for $S_2=\{x\,|\,|x|\leq 2/3\}$. One can verify that $x_2\in S_2$ implies $f(x,\mu_2(x))\in S_2$. Moreover, for every $x\in S_2$, $J_{\mu_2}(x)=13/3x^2$. As a result, we set $V_2=J_{\mu_2}$ and $g(x,-3/2x)+V_2(1/2x)=V_2(x)$; cf. condition \eqref{eq:lyap_mpc}. Since $\bar J_2=V_2+\delta_{S_2}$, $\bar J_2$ belongs to the L-region.
\end{exmp}}

The following {result} show that for an element $J\in \mathcal{L}(X)$, the values of $(T^\ell J)(x_k)$ are decreasing as $k$ increases, provided that the state sequence $\{x_k\}$ is generated under the policy $\tilde{\mu}$ defined by
$$\tilde{\mu}(x)\in \arg\min_{u\in U(x)} \Big\{g(x,u)+(T^{\ell-1}J)\big(f(x,u)\big)\Big\}.$$  

\begin{proposition}\label{prop:parellel_lya}
Let $J \in\mathcal{L}(X)$, $\ell$ be a positive integer, and for a given $x$, let $\tilde{u}$ be an element in $\tilde{U}(T^{\ell-1}J,x)$. Then we have
$$(T^\ell J)\big(f(x,\tilde{u})\big)\leq (T^\ell J)(x).$$
\end{proposition}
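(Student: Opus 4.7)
The plan is to exploit two simple facts in succession: the nonnegativity of the stage cost \eqref{eq:cost}, and the defining inequality of the region of decreasing \eqref{eq:rod}. Unpacking the definition of $\tilde{U}(J,x)$ in \eqref{eq:min_variable}, the element $\tilde{u}$ satisfies the identity
$$(TJ)(x)=g(x,\tilde{u})+J\bigl(f(x,\tilde{u})\bigr).$$
This is the only ``definitional'' step; after it, the desired inequality reduces to manipulating this identity.

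Next I would apply the cost nonnegativity $g(x,\tilde{u})\geq 0$ to drop the stage-cost term on the right, obtaining
$$J\bigl(f(x,\tilde{u})\bigr)\leq g(x,\tilde{u})+J\bigl(f(x,\tilde{u})\bigr)=(TJ)(x).$$
This bounds $J$ at the successor state by $(TJ)(x)$, which is the bridge between the value at $x$ and the value at $f(x,\tilde{u})$.

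Finally, I would invoke the hypothesis $J\in\mathcal{D}(X)$ at the specific point $f(x,\tilde{u})$, i.e., $(TJ)\bigl(f(x,\tilde{u})\bigr)\leq J\bigl(f(x,\tilde{u})\bigr)$, and chain this with the previous bound to conclude
$$(TJ)\bigl(f(x,\tilde{u})\bigr)\leq J\bigl(f(x,\tilde{u})\bigr)\leq (TJ)(x),$$
which is the claim.

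There is no real obstacle here: the proposition is essentially an immediate corollary of the definition of $\mathcal{D}(X)$ combined with the nonnegativity condition. The only thing to be careful about is not appealing to monotonicity of $T$ in a superfluous way (for instance, one might be tempted to argue via $T^2J\leq TJ$), since the direct chain above is both shorter and avoids needing $TJ$ to inherit membership in $\mathcal{D}(X)$. The result itself is the natural ``Lyapunov-type'' property that will later let one interpret $T^{\ell-1}\bar J$ as a decreasing sequence along the closed-loop trajectory of the parallel rollout policy.
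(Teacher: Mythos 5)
Your proof is correct and is essentially the paper's own argument: both rest on the identity $(TJ)(x)=g(x,\tilde{u})+J\bigl(f(x,\tilde{u})\bigr)$, the inequality $(TJ)\bigl(f(x,\tilde{u})\bigr)\leq J\bigl(f(x,\tilde{u})\bigr)$ from $J\in\mathcal{D}(X)$, and the nonnegativity of $g(x,\tilde{u})$, merely chained in a slightly different order. No gaps.
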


\begin{proof}
    We first show the result hold for $\ell=1$. We will then establish the inequality for $\ell>1$. 
    
    By the definition of $\tilde{U}(J,x)$, we have that 
    $(TJ)(x)=g(x,\tilde{u})+J\big(f(x,\tilde{u})\big).$
    The function $J$ belongs to the L-region so that  
    $g(x,\tilde{u})+(TJ)\big(f(x,\tilde{u})\big)\leq g(x,\tilde{u})+J\big(f(x,\tilde{u})\big).$
    Given that $g(x,\tilde{u})\in [0,\infty]$, we obtain the equality
    \begin{equation}
    \label{eq:lyap_1}
        (TJ)\big(f(x,\tilde{u})\big)\leq (TJ)(x).
    \end{equation}

    For $\ell>1$, since $J\in \mathcal{L}(X)$, then in view of the monotonicity of $T$, we have $T^{\ell}J\in \mathcal{L}(X)$. Applying the inequality \eqref{eq:lyap_1} with $T^{\ell}J$ replacing $J$, we get the desired inequality.
\end{proof}

\subsection{A Simplified Variant of Parallel MPC}
For some problems, computing $(TJ)(x)$ could be challenging due to the size of the set $U(x)$. In such cases, it may be convenient to restrict our attention to some subsets of $U(x)$. Given $\Bar{U}(x)\subset U(x)$, we introduce the \emph{simplified Bellman operator} $\Bar{T}:\mathcal{E}^+(X)\mapsto\mathcal{E}^+(X)$, which is defined pointwise as
\begin{equation}
    \label{eq:sim_bellman_op}
    (\Bar{T}J)(x)=\inf_{u\in \Bar{U}(x)} \big\{g(x,u)+J\big(f(x,u)\big)\big\}.
\end{equation}
Similar to the Bellman operator, we denote as $\Bar{T}^\ell$ the $\ell$-fold composition of $\Bar{T}$ with itself $\ell$ times, with the convention that $\Bar{T}^0J=J$. It is straightforward to verify that $\Bar{T}$ also has the monotonicity property: $J(x)\leq J'(x)$ for all $x\in X$ implies that $(\Bar{T}J)(x)\leq (\Bar{T}J')(x)$ for all $x\in X$. In addition, for every $J\in \mathcal{E}^+(X)$, we have that $(TJ)(x)\leq (\Bar{T}J)(x)$ for all $x\in X$. For functions defined through the operator $\Bar{T}$, we have the following results.

\begin{proposition}\label{prop:simplified}
Let $J \in\mathcal{L}(X)$ and $\Bar{U}(x)\subset U(x)$ such that
$$(\Bar{T}J)(x)\leq J(x),\quad \forall x\in X,$$
where $\Bar{T}$ is given in \eqref{eq:sim_bellman_op}. Then we have that $\Bar{T}^{\ell}J\in \mathcal{L}(X)$ where $\ell$ is a positive integer.
\end{proposition}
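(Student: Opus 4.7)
The plan is to show $(T(\bar{T}J))(x) \leq (\bar{T}J)(x)$ for all $x \in X$ by chaining two simple pointwise inequalities: one from the hypothesis via monotonicity of $T$, and one that comes for free from $\bar{U}(x) \subset U(x)$.

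First I would record the elementary fact, already noted in the paragraph preceding the proposition, that $(TJ')(x) \leq (\bar{T}J')(x)$ for every $J' \in \mathcal{E}^+(X)$ and every $x \in X$. This is immediate because the infimum of the same nonnegative function $u \mapsto g(x,u)+J'(f(x,u))$ over the larger set $U(x)$ cannot exceed its infimum over the subset $\bar{U}(x)$. Specializing to $J' = J$ gives $TJ \leq \bar{T}J$ pointwise.

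Next I would invoke the assumed inequality $(\bar{T}J)(x) \leq J(x)$ for all $x$, and apply the monotonicity property of $T$ (established via \eqref{eq:monotone} in the discussion of the Bellman operator) to conclude $T(\bar{T}J)(x) \leq (TJ)(x)$ for all $x$. Chaining this with the previous inequality yields
\[
(T(\bar{T}J))(x) \;\leq\; (TJ)(x) \;\leq\; (\bar{T}J)(x), \quad \forall x \in X,
\]
which is exactly the defining condition \eqref{eq:rod} for $\bar{T}J$ to lie in $\mathcal{D}(X)$.

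There is no genuine obstacle here: the argument only uses monotonicity of $T$ and the definitional relationship between $T$ and $\bar{T}$. The hypothesis $J \in \mathcal{D}(X)$ itself is not strictly needed for this particular conclusion; the decisive assumption is the inequality $\bar{T}J \leq J$, which encodes that restricting to $\bar{U}$ still produces an improvement over $J$. The only bookkeeping to be careful about is respecting the fact that values may be $+\infty$, but since all quantities are nonnegative and we only use monotonicity (not cancellation), the chain of inequalities remains valid in $[0,\infty]$.
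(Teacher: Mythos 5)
Your proof is correct and takes essentially the same approach as the paper: both arguments sandwich $(T(\bar{T}J))(x)$ below $(\bar{T}J)(x)$ by chaining one application of monotonicity with one application of the comparison $(TJ')(x)\leq(\bar{T}J')(x)$. The only cosmetic difference is the intermediate term---you route through $TJ$ (giving $T(\bar{T}J)\leq TJ\leq\bar{T}J$ pointwise, via monotonicity of $T$) while the paper routes through $\bar{T}^2J$ (giving $T(\bar{T}J)\leq\bar{T}(\bar{T}J)\leq\bar{T}J$, via monotonicity of $\bar{T}$)---and your observation that the hypothesis $J\in\mathcal{D}(X)$ is not actually used is accurate for the paper's proof as well.
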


\begin{proof}
    Let us first show the inequality for $\ell=1$. Since $(\Bar{T}J)(x)\leq J(x)$ for all $x$, in view of the monotonicity property of $\Bar{T}$, we have that 
    $$(\Bar{T}^2J)(x)=\big(\Bar{T}(\Bar{T}J)\big)(x)\leq (\Bar{T}J)(x),\quad \forall x\in X.$$
    On the other hand, 
    $$\big(T(\Bar{T}J)\big)(x)\leq \big(\Bar{T}(\Bar{T}J)\big)(x),\quad \forall x\in X.$$
    Combining the above inequalities, we have that
    $$\big(T(\Bar{T}J)\big)(x)\leq (\Bar{T}J)(x),\quad \forall x\in X,$$
    which means that $\Bar{T}J\in \mathcal{L}(X)$.

    By recursively applying above arguments with $\Bar{T}^{\ell-1}J$, $\ell=1,2,\dots$, in place of $J$, we obtain the desired result.
\end{proof}

In view of the above result, we can introduce a simplified parallel MPC given as follows: instead of solving for each unit $(T^{\ell_i}J_i)(x)$, we may instead solve $\big(T(\Bar{T}_i^{\ell_i-1}J_i)\big)(x)$, where $\Bar{T}_i$ are simplified Bellman operators that can vary with $i$. Conceptually, we can view $\Bar{T}^{\ell_i-1}J_i$ as a function computed to approximate $J^*$ while each unit solves a one-step lookahead problem. 

\section{Illustrating Examples}\label{sec:exmp}
In this section, we illustrate through examples the validity of the parallel MPC applied to problems with deterministic dynamics. We aim to elucidate the following points:
\begin{itemize}
    \item[(1)] Parallel MPC can be applied to a wide range of optimal control problems with deterministic dynamics.
    \item[(2)] Parallel implementation improves computational efficiency at no cost of optimality (Example~\ref{eg:lq}); see Prop.~\ref{prop:parellel}.
    \item[(3)] Parallel MPC provides better performance guarantees than that of the individual lookahead minimization involved in the method (Example~\ref{eg:pwa}); see Prop.~\ref{prop:parellel_ro_bound}.
    \item[(4)] The simplified variant is well-suited for problems with discrete components  (Example~\ref{eg:switch}); see Prop.~\ref{prop:simplified}.
\end{itemize}
We use MATLAB with toolbox MPT \cite{kvasnica2004multi} to solve Examples~\ref{eg:lq}, \ref{eg:pwa}, and \ref{eg:switch}, and we also use toolbox YALMIP \cite{lofberg2004yalmip} in Example~\ref{eg:switch}. The code in MATLAB to reproduce all the examples and figures in this section, as well as additional examples, can be found in \url{https://github.com/akarapet/pr-mpc}. Computational details of $J_i$ used in the examples are peripheral to the parallel MPC ideas and are thus deferred towards the appendices.

\begin{exmp}\label{eg:lq}(Linear Quadratic Problem with Constraints) In this example, we consider a two-dimensional linear quadratic (LQ) problem with constraint. We aim to show that parallel MPC can harness the parallel computing units at no cost of optimality. Here the state space is $X=\Re^2$, where $\Re^n$ denotes the $n$-dimensional Euclidean space, and the control constraints are $U(x)=U=\Re$. The system dynamics are given by $f(x,u)=Ax+Bu$, where 
$$A = \begin{bmatrix}
        1 & 1\\
        0 & 1\end{bmatrix},\qquad B=\begin{bmatrix}
        1\\
        0.5\end{bmatrix}.$$
The stage cost is given by $g(x,u)=x'Qx +Ru^2+\delta_C(x,u)$, where prime denotes transposition, $Q$ is an identity matrix, and $R=1$. The set $C$ is $\{(x,u)\,|\,\Vert x\Vert_\infty\leq 5,\,|u|\leq 1\}$ ($\Vert x\Vert_\infty$ denotes the infinity norm of the vector $x$). We assume that four computational units are available, where $\ell_i=2$ for $i=1,2$ and $\ell_i=3$ for $i=3,4$. The functions $J_i(x)$ are computed in the form of $x'K_{i}x+\delta_{S_i}(x)$ so that $J_i\in \mathcal{L}(X)$ by Prop.~\ref{prop:d_fun_policy}; see Appendix~A.

{As discussed after Prop.~\ref{prop:parellel}, parallel MPC defined through Eqs.~\eqref{eq:rollout}-\eqref{eq:bellman_p_rollout} can be described succinctly as the right-hand side of Eq.~\eqref{eq:paral_equiv}. Prop.~\ref{prop:parellel} shows that parallel MPC is equivalent to solve directly the problem defined by left-hand side of Eq.~\eqref{eq:paral_equiv}, which we refer to as alternative approach from now on. Compared with the parallel MPC, this alternative approach uses one computing unit and solves a single lookahead minimization problem involving binary variables. Our computational studies show that the parallel MPC significantly improves the computational efficiency compared with the alternative approach. For example, Fig.~\ref{fig:comp_time} shows the computational time required for parallel MPC and alternative approach for the states $\{x_k\}$ of trajectories starting from $x_0 = [-5~2.7]'$ and $[2.3~-0.6]'$ and generated under the parallel MPC policy. We repeated the test with $100$ trials and averaged over the trials. We note that on average in this experiment the time required for parallel MPC is about $15\%$ of that needed for the alternative approach.}

\begin{figure}[ht!]
    \centering
    \includegraphics[width=0.7\linewidth]{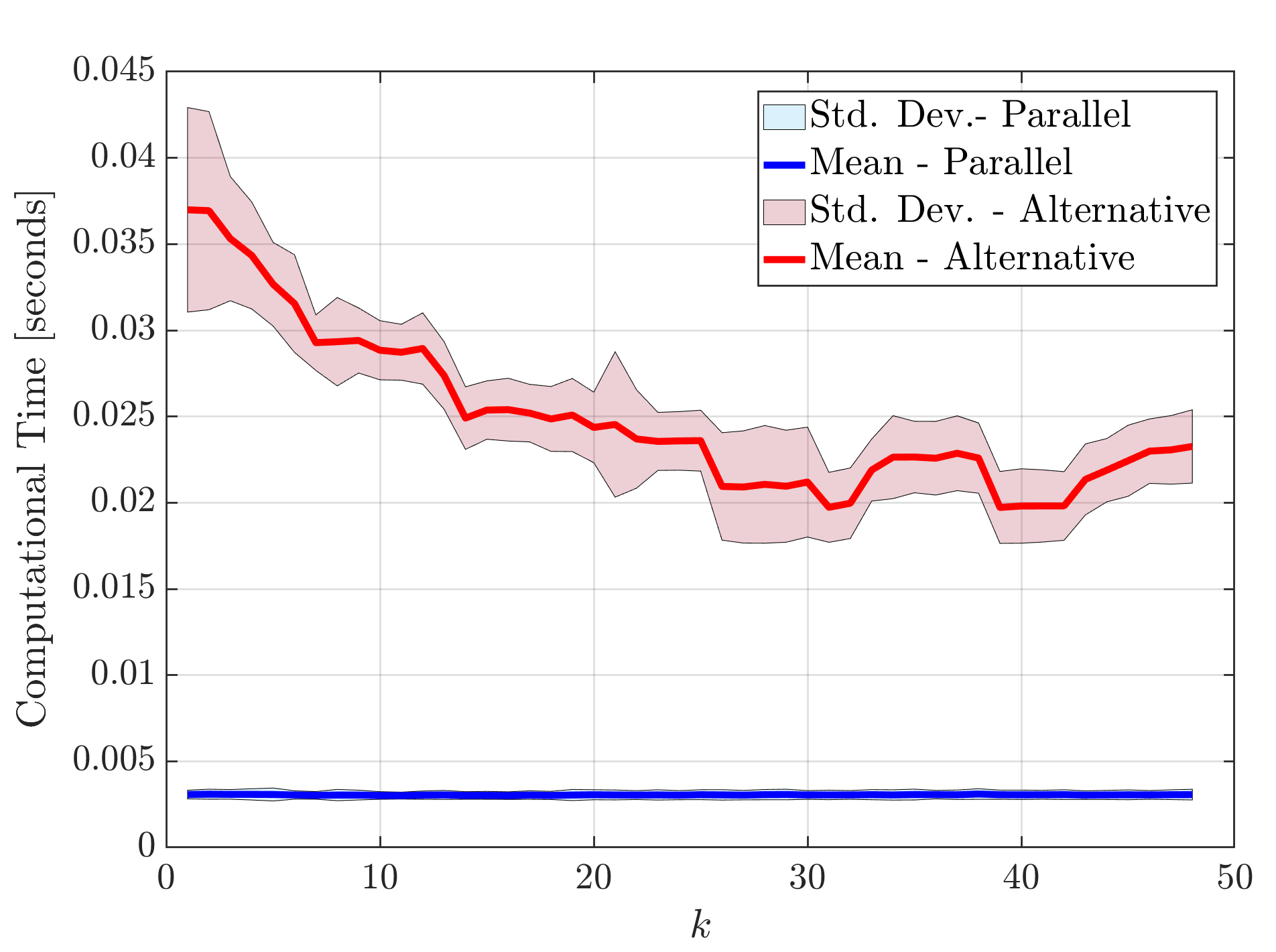}
    \caption{The computational time of parallel MPC (labeled as Parallel in the figure) is compared to that of the alternative approach with binary variables. The horizontal axis represents the index of the state along the state trajectories $\{x_k\}$, which are generated under the parallel MPC policy $\tilde{\mu}$. The computational time is averaged over $100$ trials.}
    \label{fig:comp_time}
\end{figure}
\end{exmp}

\begin{exmp}\label{eg:pwa}(Discontinuous Dynamics) In this example we demonstrate that the cost $J_{\tilde{\mu}}(x)$ of the policy $\tilde{\mu}$ computed via parallel MPC is upper bounded by the value $(T^\ell \Bar{J})(x)$. This bound is no worse than the bounds $(T^{\ell_i}J_i)(x)$, which are computed via the individual lookahead minimization. For this purpose, we consider the problem studied in \cite[Section~VI]{lazar2004stabilization}, where the dynamics are  
$$x_{k+1}=\begin{cases}A_1x_k+Bu_k,&\text{if }[1\; 0]x_k\geq 0,\\
    A_2x_k+Bu_k,&\text{otherwise.}
\end{cases}$$
The parameters of the system dynamics are
\begin{equation*}
    A_1 = \begin{bmatrix}
        0.35 & -0.61\\
        0.61 & 0.35\end{bmatrix},\;A_2 = \begin{bmatrix}
        0.35 & 0.61\\
        -0.61 & 0.35\end{bmatrix},\;
        B=\begin{bmatrix}
        0\\
        1\end{bmatrix}.
\end{equation*}
The stage cost is $g(x,u)=x'Qx +Ru^2+\delta_C(x,u)$, where $R=0.4$, and the matrix $Q$ and the set $C$ are the same as in Example~\ref{eg:lq}. We assume four computational units and set $\ell_i=2$ for all $i$, and introduce for each unit a function $J_i\in \mathcal{L}(X)$; see Appendix~B for the computational details. Table~\ref{table:hybrid} lists the results where parallel MPC is applied to some states $x_0$. These include the bounds $\tilde{J}_i$ computed by the individual units, the unit $\tilde{i}$ selected by the central unit, the actual performance bound $(T^\ell \Bar{J})(x_0)$, which bounds the cost function values $J_{\tilde{\mu}}(x_0)$; see Prop.~\ref{prop:parellel_ro_bound}. Empirically, we also found that the cost function $J_{\tilde{\mu}}$ of parallel MPC policy is lower than that of the policies defined by the individual lookahead minimization involved in the scheme. In other words, parallel MPC outperforms the MPC policies defined by the individual lookahead minimization.         
\begin{table}[ht!]
\caption{Optimal values and costs in Example~\ref{eg:pwa}.}
\label{table:hybrid}
\centering
\begin{tabular}{ |c|c|c|c|c|c|c|c| } 
\hline
 $x_0$ & $\tilde{J}_1$ & $\tilde{J}_2$ & $\tilde{J}_3$ & $\tilde{J}_4$ & $\tilde{i}$ & $T^\ell \Bar{J}$ & $J_{\Tilde{\mu}}$ \\
\hline
$[-4\, 4.6]'$ & $53.4$ & $\infty$ & $53.4$ & $54.3$ & $3$ & $53.4$ & $52.2$\\
\hline
$[5\, 3.4]'$ & $53.1$ & $\infty$ & $\infty$ & $54.7$ & $1$ & $53.1$ & $52.0$ \\
\hline
$[-4.5\, 2.7]'$ & $39.2$ & $39.2$ & $39.1$ & $40.4$ & $3$ & $39.1$ & $38.4$ \\
\hline
$[-5\, -5]'$ & $\infty$ & $\infty$ & $91.1$ & $85.9$ & $4$ & $85.9$ & $81.4$ \\
\hline
\end{tabular}

\end{table}
\end{exmp}

\begin{exmp}\label{eg:switch}(Switching System) In this example we show that the simplified variant of parallel MPC is well-suited for problems involving switching systems, where the control has both discrete and continuous elements. We consider a problem modified from \cite[Section~IV.A]{zhang2011infinite}, where the dynamics are
$x_{k+1}=A_{d_k}x_k+B_{d_k}v_k,$ where $u=(v,d)$ with $v\in \Re$ and $d\in\{1,2\}$, and the control constraint set is $U(x)= \Re\times\{1,2\}$ for all $x$. In addition,
$$A_1 = \begin{bmatrix}
        2 & 1\\
        0 & 1\end{bmatrix},\;B_1=\begin{bmatrix}
        1\\
        1\end{bmatrix},\;A_2 = \begin{bmatrix}
        2 & 1\\
        0 & 0.5\end{bmatrix},\;B_2=\begin{bmatrix}
        1\\
        2\end{bmatrix}.$$
The stage cost is $g(x,u)=x'Qx +Rv^2+\delta_C(x,v)$, where the parameters $Q$, $R$, and $C$ are the same as those in Example~\ref{eg:lq}. Compared with the problem studied in \cite[Section~IV.A]{zhang2011infinite}, we impose state and control constraints; in particular the term $\delta_C(x,v)$ is added to the stage cost. We use two computing units and we set $\ell_i=5$ for both units. We also introduce control constraint subsets $\Bar{U}_i(x)=\Re\times \{i\}$, $i=1,2$. The details of $J_i$ are given in  Appendix~C. The results for simplified parallel MPC, computed for some states $x_0$, are summarized in Table \ref{table:switch}. The cost function values are compared against a lower bound of $J^*$, which is given by $T^8\Bar{J}_0$, where $\Bar{J}_0(x)\equiv0$ for all $x$. The difference between $J_{\tilde{\mu}}$ and $T^8\Bar{J}_0$ defines an optimality gap of $J_{\tilde{\mu}}$. The values of the gap indicate that the policy $\tilde{\mu}$ is near optimal. We note that the parallel MPC scheme solves $4$ quadratic programs per time step, whereas the lower bound solves $2^8$ quadratic programs of a similar kind but larger size. For the computation of the lower bound, see Appendix~D. Moreover, we also verified in these tests that the cost function values of parallel MPC policy are lower than that of the MPC policies defined by the individual lookahead. 

\begin{table}[ht!]
\caption{Optimal values and costs in Example~\ref{eg:switch}.}
\label{table:switch}
\centering
\begin{tabular}{ |c|c|c|c| } 
\hline
 $x_0$ &  $T^\ell \Bar{J}$ & $J_{\Tilde{\mu}}$ & $\frac{J_{\Tilde{\mu}}-T^8\overline{J}_0}{J_{\Tilde{\mu}}}$ \\
\hline
$[-4\, 4.6]'$ & $65.8$ & $65.8$ & ${<10^{-5}}$ \\
\hline
$[1.2\, 1.5]'$ & $89.2$ & $86.6$ & $<10^{-3}$ \\
\hline
$[-3.5\, 2.0]'$ & $123.3$ & $113.5$ & $<10^{-4}$ \\
\hline
$[-1.5\, -0.5]'$ & $34.6$ & $34.6$ & ${<10^{-4}}$  \\
\hline
\end{tabular}

\end{table}
\end{exmp}

    

\section{Conclusions}\label{sec:con}
We considered infinite horizon optimal control problems with deterministic dynamics. Drawing inspiration from parallel rollout, we proposed a parallel MPC method that computes approximate solutions. {The proposed method solves} multiple lookahead minimization problems with different lookahead steps, and terminal costs and terminal constraints. {We showed that the proposed method can harness the power of multiple computing units.} In addition, we {proved} that parallel MPC provides better performance bounds than that given by the individual lookahead minimization involved in the method. Our computational examples {suggest} that in addition to better bounds, parallel MPC {often} provides better performance than each of the MPC schemes that correspond to the individual lookahead minimizations. Preliminary tests show that the computational savings carry to the larger scale problems.

\appendix
\section*{Appendix}

In the appendix, we provide computational details of the functions $J_i$ used in Examples~\ref{eg:lq}, \ref{eg:pwa}, and \ref{eg:switch}. Using Prop.~\ref{prop:d_fun_policy}, all functions $J_i$ of the examples are constructed in the form of $V+\delta_S$. All these functions are computed offline before the realtime operation starts. We also provide a lower bound of $J^*$ obtained via value iteration. This bound is used in Example~\ref{eg:switch} to demonstrate the suboptimality of the proposed scheme. Note that alternative approaches for computing $J_i$ introduced in \cite{li2023performance} can also be applied here.

\section{Off-Line Computation for Example~\ref{eg:lq}}
In this example, we compute the functions $J_i$ such that for some policy $\mu_i$, $J_i(x)=J_{\mu_i}(x)$ for all $x$ in some set $S_i$. For $i=1$, we compute $L_1=-(B'K_{1}B+R)^{-1}B'K_{1}A$, where $K_{1}$ is the solution of the matrix equation 
$$K=A'(K-KB(B'KB+R)^{-1}B'K)A+Q.$$
The set $S_1$ is computed by the command \texttt{invariantSet()} in the MPT toolbox for the system $A+BL_1$ with the presence of the constraint set $C$. The function $J_1$ is defined as $x'K_{1}x+\delta_{S_1}(x)$. As a result, we have $J_1(x)=J_{\mu_1}(x)$ and $\mu_1(x)=L_1x$ for all $x\in S_1$. Similar to $i=1$, we compute $L_2=-[0.1\; 1.2]$, $L_3=-[0.2\; 0.7]$ and $L_4=-[0.3\; 0.8]$ for the units $i=2,3,4$. The values of $L_i$ for these units are obtained via trial and error. The $J_i$ for $i=2,3,4$ are defined as $x'K_{i}x+\delta_{S_i}(x)$, where $K_{i}$ is the solution to the equation
$$K=(A+BL_i)'K(A+BL_i)+Q+L_i'RL_i,\quad i=2,3,4.$$
The sets $S_2$ and $S_3$ are computed in the same way as is for $S_1$, while the set $S_4$ is defined as $S_4=\{x\in \Re^2\,|\,x'K_{4}x\leq \alpha^*\}$. The value $\alpha^*$ used for defining $S_4$ is the optimal value of the optimization problem
$$\min_{\alpha\in\Re}\  \alpha, \ \
	\mathrm{s.\,t.} \   x'K_{4}x\leq \alpha,\;\forall x \text{ so that }(x,L_4x)\in C.$$
This approach of constructing the set dates back to the thesis \cite{bertsekas1971control}. The constraint in the optimization can be transformed as linear constraints imposed on $\alpha$ via standard multiplier method, cf. \cite{bertsekas2014constrained}.

\section{Off-Line Computation for Example~\ref{eg:pwa}}\label{app:eg4_6}
In this example, we compute the functions $J_i$ such that for some policy $\mu_i$, we have that  $g\big(x,\mu_i(x)\big)+J_i\big(f\big(x,\mu_i(x)\big)\big)\leq J_i(x)$
for all $x$ in some set $S_i$. The policies $\mu_i$ for $i=1,2,3$ take the form $\mu_i(x)=L_i^1x$ if $[1\; 0]x_k\geq 0$ and $L_i^2x$ otherwise. The policy $\mu_1$ is the same as the one given in \cite[Section VI]{lazar2004stabilization}. The function $J_1$ takes the form $x'K_1x+\delta_{S_1}(x)$, with $K_1$ and $S_1$ given by \cite[Section VI]{lazar2004stabilization}. For the second unit, we set $L_2^i=-(B'K_i B+R)^{-1}B'K_iA_i$, where $K_i$ is the solution of
$$K=A_i'(K-KB(B'KB+R)^{-1}B'K)A_i+Q,\quad i=1,2.$$
We follow the procedure outlined in \cite[Sections IV, V]{lazar2004stabilization} and obtain
$$K_2=\begin{bmatrix}
    1.4786  &  0\\
   0 &    1.9605
    \end{bmatrix},$$
as well as the set $S_2=\{x\in\Re^2\,|\,P_2x\leq e_4\}$, where $e^n\in \Re^n$ is an $n$-dimensional vector with all elements equal to $1$, and $P_2$ is
$$P_2'=\begin{bmatrix}
    -0.4452  &  0.4452  &  0.4452  & -0.4452\\
   -0.3354  &  0.3354 &  -0.3354  &  0.3354
    \end{bmatrix}.$$

For the third unit, the parameters are synthesized by solving a linear matrix inequality given in \cite[Sections IV]{lazar2004stabilization}. The obtained parameters are $L_3^1=-[0.5112   \;0.2963]$ and $ L_3^2=[0.5112 \;-0.2963]$. Similarly, the function $J_3$ takes the form $x'K_3x+\delta_{S_3}(x)$ with
$$K_3=\begin{bmatrix}
    1.7847   &  0\\
   0 &    2.1375
    \end{bmatrix},$$
and the set $S_3=\{x\in\Re^2\,|\,P_3x\leq e_8\}$, where 
    \begin{equation*}
    P_3'=\begin{bmatrix}
    0.15 &  -0.15 &  -0.15  &  0.15  & -0.51  &  0.51   & 0.51  & -0.51\\
   -0.33 &   0.33  & -0.33 &   0.33 &  -0.30  &  0.30  & -0.30  &  0.30
    \end{bmatrix}.
\end{equation*}
Given that matrices $A_1$ and $A_2$ are both stable, we set $\mu_4(x)=0$ for all $x$. The function $J_4$ is given by $x'K_4x+\delta_{S_4}(x)$, where $S_4=\{x\in\Re^2\,|\,P_4x\leq e_4\}$, and
$$K_4=\begin{bmatrix}
    1.9631   &  0\\
   0 &    1.9631
    \end{bmatrix}, \ P_4'=\begin{bmatrix}
    1 &    0   & -1    & 0\\
     0  &   1   &  0  &  -1
    \end{bmatrix}.$$

\section{Off-Line Computation for Example~\ref{eg:switch}}
We define the functions $J_i$ such that for some policy $\mu_i$, $J_i(x)=J_{\mu_i}(x)$ for all $x$ in some set $S_i$. To this end, we set $L_i=-(B_i'K_{i}B_i+R)^{-1}B_i'K_{i}A_i$, $i=1,2$, where $K_{i}$ is the solution of the matrix equation
$$K=A_i'(K-KB_i(B_i'KB_i+R)^{-1}B_i'K)A_i+Q.$$ The set $S_i$ is computed by the command \texttt{invariantSet()} in the MPT toolbox for the system $A_i+B_iL_i$ with the presence of the constraint set $C$. The function $J_i$ is defined as $x'K_{i}x+\delta_{S_i}(x)$. This is the cost function of the policy that sets $d=i$, and $v=L_ix$ for $x\in S_i$. When applying the simplified variant at $x_k$ with $\ell_i=5$ and $J_i$, the computation on the $i$th unit takes the form $\big(T(\Bar{T}^4_iJ_i)\big)(x_k)$, where 
$(\Bar{T}_iJ)(x)=\inf_{u\in \Bar{U}_i(x)} \big\{g(x,u)+J\big(f(x,u)\big)\big\},$
with $\Bar{U}_i(x)=\Re\times \{i\}$, $i=1,2$; cf. \eqref{eq:sim_bellman_op}. This is equivalent to the following optimization problem:
\begin{subequations}
    \begin{align*}
	\min_{\{v_{k+j}\}_{j=0}^{4},d}& \quad x_{k+5}'K_ix_{k+5}+\sum_{j=0}^{4} x_{k+j}'Qx_{k+j}+Rv_{k+j}^2\\
	\mathrm{s.\,t.} & \quad x_{k+1}=A_dx_k+B_dv_k,\,d\in\{1,2\},\\
    & \quad x_{k+j+1}=A_ix_{k+j}+B_iv_{k+j},\,j = 1,...,4,\\
    & \quad  \Vert x_{k+j}\Vert_\infty\leq 5, \ j = 0,...,5, \\
	& \quad  |v_{k+j}|\leq 1, \ j = 0,...,4,\, x_{k+5}\in S_i.
	\end{align*}
\end{subequations} 

\section{A Lower Bound of Optimal Cost}\label{app:lb}

In Example~\ref{eg:switch}, the lower bound of $J^*(x_k)$ is $(T^8\Bar{J}_0)(x_k)$, which is the optimal value of the following optimization problem:
\begin{subequations}
    \begin{align*}
	\min_{\{(v_{k+j},d_{k+j})\}_{j=0}^{7}}& \quad \sum_{j=0}^{7} x_{k+j}'Qx_{k+j}+Rv_{k+j}^2\\
	\mathrm{s.\,t.} & \quad x_{k+j+1}=A_{d_k}x_{k+j}+B_{d_k}v_{k+j},\,j = 0,...,7,\\
    & \quad  \Vert x_{k+j}\Vert_\infty\leq 5, \ j = 0,...,7, \\
	& \quad  |v_{k+j}|\leq 1,\,d_{k+j}\in \{1,2\} \ j = 0,...,7.
	\end{align*}
\end{subequations}

\section*{Acknowledgement}
The authors would like to express their gratitude to Prof. Dimitri Bertsekas for the extensive helpful comments.

\bibliographystyle{plain}
\bibliography{ref}

\end{document}